\definecolor{Red}{rgb}{1,0,0}
\definecolor{Blue}{rgb}{0,0,1}
\numberwithin{equation}{section}
\theoremstyle{definition}
\newtheorem{definition}{Definition}[section]
\theoremstyle{remark}
\newtheorem{remark}[definition]{Remark}
\theoremstyle{plain}
\newtheorem{theorem}[definition]{Theorem}
\newtheorem{result}[definition]{Result}
\newtheorem{proposition}[definition]{Proposition}
\newcommand{\zt}{\zeta}
\newcommand{\ent}{\epsilon_0}
\newcommand{\bdy}{\partial}
\newcommand{\OM}{\Omega}
\newcommand{\dee}{\mathbb{D}}
\newcommand{\smoo}{\mathcal{C}}
\newcommand{\holo}{\mathcal{O}}
\newcommand{\bcdot}{\boldsymbol{\cdot}}
\newcommand{\bdot}{\boldsymbol{.}}
\newcommand\newfrc[2]{{}^{\raisebox{-2pt}{$\scriptstyle {#1}$}}\!/_{\raisebox{2pt}{$\scriptstyle {#2}$}}}
\newcommand{\CC}{\mathbb{C}^2}
\newcommand{\cplx}{\mathbb{C}} 
\newcommand{\Cn}{\mathbb{C}^n}
\begin{document}

\title[A simple Kontinuit{\"{a}}tssatz]{A simple Kontinuit{\"{a}}tssatz}
\author{Chandan Biswas}

\thanks{This work is supported by a UGC Centre for Advanced Study grant and by a scholarship from the IISc}

\address{Department of Mathematics, Indian Institute of Science, Bangalore 560012, India}
\email{biswas.270@gmail.com}

\keywords{Analytic continuation, envelope of holomorphy, Kontinuit{\"a}tssatz}
\subjclass[2000]{Primary 32D10, 32D15}

\date{\today}

\begin{abstract} 
We are interested in several informal statements referred  as ``Kontinuit{\"{a}}tssatz'' in 
the recent literature on analytic continuation. The basic (unstated) principle that seems to be 
in use in these works appears to be a folk theorem. We provide a precise statement of this 
folk Kontinuit{\"{a}}tssatz and give a proof of it.
\end{abstract}
\maketitle

\section{Introduction and statement of results}\label{S:intro}
In several recent papers on the subject of analytic continuation --- see 
\cite{Chirka01,CRosay01,Bb01}, for instance --- one comes across versions of an 
informal lemma referred as ``Kontinuit{\"{a}}tssatz" or ``the continuity principle". 
The various conclusions deduced from it are obtainable from the following general 
principle:

\begin{itemize}
\item[$(*)$] Let $\OM$ be a domain in $\Cn, \ n\geq 2$, let $\{D_{t}\}_{t\in [0,1]}$ 
be an indexed family of smoothly bounded open sets in $\cplx$, and let 
$\Psi_{t}:\overline{D}_{t}\rightarrow\mathbb{C}^{n-1}$ be maps such that 
$\Psi_{t}\in H(D_{t})\cap\smoo(\overline{D}_{t})$ for each $t\in [0,1]$. 
Assume that the $D_{t}$'s vary continuously with $t$ in some appropriate 
sense such that $\Gamma:=\cup_{t\in [0,1]}(\overline{D}_{t}\times\{t\})$ forms 
a ``nice'' compact body: such that $\overline{(\cplx\times (0,1))\cap \partial\Gamma}$ is 
a smooth submanifold with boundary, for instance. Also assume that 
$F:\Gamma\rightarrow \mathbb{C}^{n-1}$ given 
by $F(\zt,t):=\Psi_{t}(\zt)$ is continuous. Suppose we know that:

\begin{itemize}
 \item[$i)$] $\{(\zt,\Psi_{t}(\zt)):\zt\in \bdy D_{t}\}\subset K$, for some compact set 
 $K\subset\OM$ for each $t\in [0,1]$;
 \item[$ii)$] ${\sf Graph}(\Psi_{0})\subset\OM$ and ${\sf Graph}(\Psi_{1})\nsubseteq\OM$.
\end{itemize}

\noindent{Then there exists a neighbourhood $V$ of ${\sf Graph}(\Psi_{1})\cup K$ and 
a neighbourhood $W$ of $K$, $K\Subset W\subset \OM\cap V$, such that for 
each $f\in H(\OM)$ there exists a $G_{f}\in H(V)$ and $G_{f}|_{W}\equiv f|_{W}$.}
\end{itemize}
\smallskip

\noindent{Here, if $D$ is an open set then $H(D)$ denotes the class of holomorphic functions 
 or maps on $D$.}
\smallskip

The statement $(*)$ {\em per se} is not new. But, to the best of my knowledge, {\em there does not 
seem to be a proof of $(*)$, in that generality, in the literature} (also, the reader is directed 
to Remark~\ref{R:Chirka-Stout} below). Of course, $(*)$ is easy to derive in special cases. If 
${\sf Graph}(\Psi_{t})\cap{\sf Graph}(\Psi_{s})=\emptyset$ for $s\neq t$, 
then $(*)$ is classical. If $D_{t}=D$, a fixed planar region, for each 
$t\in[0,1]$, then the above can be deduced by analytically continuing an $H(\OM)$-germ 
(see Section~\ref{S:notation} for a definition) along the path 
$t\mapsto(\zt,\Psi_{t}(\zt))$ for a fixed $\zt\in D$. When the $D_t$'s vary, 
but still assuming ${\sf Graph}(\Psi_{t})\cap{\sf Graph}(\Psi_{s})=\emptyset$ for 
$s\neq t$, $(*)$ follows from Behnke's work \cite{Behnke01}.
\smallskip

When ${\sf Graph}(\Psi_{t})\cap{\sf Graph}(\Psi_{s})\neq\emptyset$ for some 
$t\neq s$, we must worry about multivaluedness. This motivates us to ask whether the 
maps ${\sf id}_{\overline{D}_{t}}\!\times\Psi_{t}$ can be ``lifted" into the 
envelope of holomorphy of $\OM$ continuously in the parameter $t\in [0,1]$. 
Intuitively, this seems possible. However, note that as $D_t$ varies with $t$, the 
$D_t$'s do not even have to be conformally equivalent to each other. In this generality, 
{\em rigorously} producing such a ``lifting'' is not entirely effortless. It would thus 
be nice to have an argument written up for this. 
\smallskip

To summarise: it appears that $(*)$ in the above generality is folk theorem (the remark 
following Theorem~\ref{T:MainThm} is relevant here). The aim of this work is to give a proof 
of $(*)$ (or rather, a slight generalization of it) after making precise some of the 
loosely-stated assumptions in $(*)$.
\smallskip

Some notation: if $S\subset\cplx\times[0,1]$, we shall use $\,^t{S}$ to denote the set
$\{\zt\in \cplx : (\zt,t)\in S\}$.

\begin{theorem}\label{T:MainThm}
Let $\OM$ be a domain in $\Cn$, $n\geq 2$. Let $\Gamma$ be a compact body in $\cplx\times[0,1]$
such that $\partial_*\Gamma:=\overline{(\cplx\times (0,1))\cap \partial\Gamma}$ is a 
(not necessarily connected) $\smoo^2$-smooth submanifold with boundary. Assume that the sets
$$
 D_t := \begin{cases}
	\,^t(\Gamma^{\circ}), &\text{if $t\in (0,1)$}, \\
	(\,^t\Gamma)\cap\left(\cplx\setminus{\,^t(\partial_*\Gamma)}\right), &\text{if $t=0,1$}, 
	\end{cases}
$$
are domains in $\cplx$ with $\smoo^2$-smooth boundaries for each $t\in [0,1]$, and that 
the set-valued function $[0,1]\ni t\longmapsto\overline{D}_t$ is continuous relative to the 
Hausdorff metric (on the space of compact subsets of $\cplx$). Let $K$ be a compact subset
of $\OM$ and suppose there is a 
continuous map $\Psi:\Gamma\rightarrow\Cn$ with the following properties:
\begin{itemize}
 \item[$1)$] For each $t\in[0,1]$, $\Psi_t:=\Psi(\bcdot,t)\in H(D_t)\cap\smoo(\overline{D}_t)$;
 \item[$2)$] $\cup_{t\in [0,1]}\Psi_t(\partial D_t) = K$  and
 $\Psi_t(D_t)\cap K = \emptyset$ for each $t\in [0,1]$;
 \item[$3)$] $\Psi_0(\overline{D}_0)\subset \Omega$ and $\Psi_1(\overline{D}_1)\nsubseteq\OM$;
 \item[$4)$] For each $t\in[0,1]$, $\Psi_t$ is a continuous imbedding 
 and $\Psi_t|_{D_t}$ is a holomorphic imbedding.
\end{itemize}
Then there exists a neighbourhood $V$ of $\Psi_1(\overline{D}_1)\cup K$ and
a neighbourhood $W$ of $K$ satisfying $K\Subset W\subset \OM\cap V$ such that 
for any $f\in H(\OM)$, there exists $G_f\in H(V)$ such that
$G_f|_{W}\equiv f|_{W}$.
\end{theorem}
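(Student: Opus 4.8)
The plan is to lift the whole family $\{\Psi_t\}_{t\in[0,1]}$ to the envelope of holomorphy of $\OM$ and then carry the conclusion back down to $\Cn$. Denote by $(\widehat{\OM},p)$ the envelope of holomorphy of $\OM$: a (Hausdorff, holomorphically convex) Riemann domain over $\Cn$, together with a holomorphic imbedding $j\colon\OM\hookrightarrow\widehat{\OM}$ over $\Cn$ (so $p\circ j=\mathrm{id}_{\OM}$), such that every $f\in H(\OM)$ extends uniquely to some $\widehat f\in H(\widehat{\OM})$; the existence of $\widehat{\OM}$ and its holomorphic convexity are classical (Thullen; Docquier--Grauert). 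The crux of the argument will be to construct a \emph{continuous} map $\widehat\Psi\colon\Gamma\to\widehat{\OM}$ with $p\circ\widehat\Psi=\Psi$, with $\widehat\Psi_0:=\widehat\Psi(\bcdot,0)=j\circ\Psi_0$ (legitimate since $\Psi_0(\overline D_0)\subset\OM$ by $3)$), with each $\widehat\Psi_t:=\widehat\Psi(\bcdot,t)$ holomorphic on $D_t$, and with $\widehat\Psi_t(\bdy D_t)=j\!\left(\Psi_t(\bdy D_t)\right)$ for every $t$ (the last expression makes sense because $\Psi_t(\bdy D_t)\subset K\subset\OM$ by $2)$).

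Granting such a lift, the theorem follows quickly. First $\widehat\Psi_1$ is injective, since $p\circ\widehat\Psi_1=\Psi_1$ is injective by $4)$. Next, $p$ is injective on the compact set $C:=\widehat\Psi_1(\overline D_1)\cup j(K)$: it is injective on each of $\widehat\Psi_1(\overline D_1)$ and $j(K)$ separately; a point of $\widehat\Psi_1(\bdy D_1)$ already lies in $j(K)$ by the boundary property of the lift; and if a point of $\widehat\Psi_1(D_1)$ and a point of $j(K)$ had equal $p$-value, that would contradict $\Psi_1(D_1)\cap K=\emptyset$ from $2)$. Now a local homeomorphism that is injective on a compact set is injective on some open neighbourhood of that set (an elementary sequential argument), so there is an open $\widehat V\subset\widehat{\OM}$ with $C\subset\widehat V$ on which $p$ is one-to-one; then $p$ restricts to a biholomorphism of $\widehat V$ onto the open set $V:=p(\widehat V)\supset\Psi_1(\overline D_1)\cup K$. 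Put $G_f:=\widehat f\circ\bigl(p|_{\widehat V}\bigr)^{-1}\in H(V)$ and $W:=j^{-1}(\widehat V)$, an open subset of $\OM$ containing $K$. Then $K\Subset W\subset\OM\cap V$, and for $\omega\in W$ one has $\bigl(p|_{\widehat V}\bigr)^{-1}(\omega)=j(\omega)$, so $G_f(\omega)=\widehat f(j(\omega))=f(\omega)$. (Hypothesis $3)$ that $\Psi_1(\overline D_1)\nsubseteq\OM$ plays no role here; it only ensures that the statement is not vacuous.)

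To build the lift I would run the continuity method in $t$. Let $T$ be the set of $\tau\in[0,1]$ for which a continuous lift with all the stated properties exists over $\Gamma\cap(\cplx\times[0,\tau])$; by the identity theorem on the connected slices $\overline D_t$ together with continuity in $t$, such a partial lift is unique once $\widehat\Psi_0$ is fixed, so the partial lifts are mutually compatible, and $0\in T$. The engine driving the method is an a priori bound confining all lifted discs to one fixed compact subset of $\widehat{\OM}$: applying the maximum principle to $\widehat f\circ\widehat\Psi_t\in H(D_t)\cap\smoo(\overline D_t)$ gives
$$
\sup_{\overline D_t}\bigl|\widehat f\circ\widehat\Psi_t\bigr|\;=\;\sup_{\bdy D_t}\bigl|\widehat f\circ\widehat\Psi_t\bigr|\;=\;\sup_{\Psi_t(\bdy D_t)}|f|\;\le\;\sup_{K}|f|\qquad\text{for every }f\in H(\OM),
$$
so $\widehat\Psi_t(\overline D_t)$ lies in the holomorphic hull of $j(K)$ in $\widehat{\OM}$, which is relatively compact in $\widehat{\OM}$ by holomorphic convexity. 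Cover this hull by finitely many ``plaques'' on which $p$ is a biholomorphism onto a ball. For openness of $T$: given $\widehat\Psi_{\tau_0}$, the continuity of $\Psi$ and the Hausdorff-continuity of $t\mapsto\overline D_t$ let one, for $t$ near $\tau_0$, define $\widehat\Psi_t$ by anchoring it in the appropriate plaque over a point common to all nearby $D_t$ and then propagating through the plaque cover --- holomorphy and the identity theorem making the propagation well defined, and the prescribed boundary lift $j\circ(\Psi_t|_{\bdy D_t})$ supplying the correct boundary values and controlling the monodromy around a possibly non-simply-connected $D_t$; the a priori bound guarantees that the propagation never leaves the fixed finite plaque cover. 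Closedness of $T$ is argued similarly, passing to a limit inside the same fixed compact set, with the lift forced to equal $j\circ\Psi_t$ near $\bdy D_t$ and pinned down in the interior by holomorphy. Hence $T=[0,1]$ and the lift exists.

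The step I expect to be the main obstacle is precisely this openness/closedness of $T$ when the planar domain $D_t$ is allowed to vary --- the difficulty flagged in the introduction, that the $D_t$ need not even be conformally equivalent. One cannot reparametrise all the $D_t$ over a single model disc without destroying the holomorphy of $\widehat\Psi_t$, so the continuation and the monodromy bookkeeping have to be carried out directly on the moving family; here the $\smoo^2$-regularity of $\partial_*\Gamma$ together with the Hausdorff-continuity of $t\mapsto\overline D_t$ is what one uses to control how the slices $D_t$ deform, while the uniform confinement to a fixed compact subset of $\widehat{\OM}$ is what prevents the construction from degenerating.
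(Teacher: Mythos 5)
Your architecture coincides with the paper's: lift the family $\{\Psi_t\}$ into the envelope of holomorphy $(\widetilde{\OM},p,j)$ by a connectedness argument in $t$ (the paper isolates this as Proposition~\ref{KSTZ3}), with the a priori confinement of the lifted slices to $(\widehat{K^*})_{\widetilde{\OM}}$, $K^*=j(K)$, obtained from the maximum principle exactly as you describe, and then push the extensions back down to $\Cn$. Your endgame, however, is genuinely different and is correct: you note that $p$ is injective on the compact set $\widetilde{\Psi}_1(\overline{D}_1)\cup j(K)$ (this is where hypotheses $(2)$ and $(4)$ enter) and invoke the fact that a local homeomorphism of Hausdorff spaces that is injective on a compact set is injective on a neighbourhood of it --- true here, the cleanest justification being that $\{(x,y): x\neq y,\ p(x)=p(y)\}$ is closed in $\widetilde{\OM}\times\widetilde{\OM}$. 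The paper instead builds $V$ \emph{explicitly} as a union of polydiscs of a uniform radius $\ent$ centred on $\Psi_1(\overline{D}_1)\cup K$ and proves well-definedness of the glued extension via a tubular-neighbourhood argument around the imbedded surface $\Psi_1(\overline{D}_1)$; your route is shorter but forfeits the explicit description of $V$ that the paper advertises as one of its outputs.

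The genuine gap is in the lifting step, which is the technical heart of the theorem and which you yourself flag as ``the main obstacle''. Your openness/closedness argument for $T$ rests on ``anchoring in a plaque and propagating through the plaque cover, the identity theorem making the propagation well defined, the boundary lift controlling the monodromy''. For a multiply connected $D_t$ this is precisely the unresolved point: continuation of a germ along paths in $D_t$ is a priori path-dependent, and neither the identity theorem nor the prescribed boundary values rules out monodromy; nothing in your sketch shows the propagated value at a point is independent of the chain of plaques used to reach it. The paper's mechanism for this is concrete and is what your proposal lacks: by Cartan--Thullen, every point of $(\widehat{K^*})_{\widetilde{\OM}}$ carries a schlicht polydisc of the \emph{uniform} radius $d(K^*)=d_{\widetilde{\OM}}(K^*)$, so each germ $\tilde{\Psi}_t(\zt)$ has a canonical representative on $D^n(\Psi_t(\zt),d(K^*))$ (Fact~1); one then fixes $r>0$ so small that $D(\zt,r)\cap\overline{D}_t$ is connected for all $t$ and $\zt$ (this is where the $\smoo^2$ regularity of $\partial_*\Gamma$ and the Hausdorff continuity of $t\mapsto\overline{D}_t$ are consumed) and shows by a chain-of-discs induction that these canonical representatives agree at nearby points of the same slice (Fact~2). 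The passage from $t_0\in T$ to a nearby $\tau$ is then done \emph{pointwise} --- $\tilde{\Psi}_\tau(\zt)$ is declared to be the germ at $\Psi_\tau(\zt)$ of the canonical representative attached to some $x(\zt)\in\overline{D}_{t_0}$ with $|\zt-x(\zt)|<r/4$ --- and well-definedness, continuity, and the boundary condition all follow from Fact~2; no continuation along paths inside $D_\tau$ is ever performed, so no monodromy question arises. Some such uniform-radius, path-free transport is needed to make your continuity method close; as written, the proposal asserts the conclusion of Proposition~\ref{KSTZ3} rather than proving it.
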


\begin{remark}\label{R:Chirka-Stout} 
Theorem~\ref{T:MainThm} would follow as a special case of a 
Kontinuit{\"{a}}tssatz-type result by Chirka and Stout \cite{Chirka-Stout}.
However, there are gaps in their proof. From Theorem 3 of the paper \cite{PJ01} by
J{\"o}ricke and Porten, one can construct an example showing that the
Chirka--Stout Kontinuit{\"{a}}tssatz is erroneous. In his thesis,  L. Nobel \cite{LNobel}
has given an alternative to the Chirka--Stout statement. However, the proof of
Nobel's (unpublished) result requires a considerable amount of machinery. 
Thus, an elementary proof in the set-up of Theorem~\ref{T:MainThm} would be
desirable. We note that in the very special case when $D_t = \dee$
(the open unit disc in $\cplx$) for every $t$, the assertion of Chirka--Stout can be deduced
without much machinery. We refer the reader to \cite{LNobel12} by Nobel for a precise
statement in this simple set-up. 
\end{remark}

Two observations on the geometry of $\Gamma$ in Theorem~\ref{T:MainThm}
are in order. The assumption that the boundary components of the slices $D_t$ are
$\smoo^2$-smooth curves is to alert the reader to the fact that we require 
$\partial D_t$ to have no isolated points, i.e. punctures in $D_t$, or components with 
nodal singularities (whose appearance in some $D_{t^0}$ would
indicate a change in the homtopy type of the $D_t$'s as $t$ varies in a neighbourhood of $t^0$).
With this being clear, we could have stated that the $D_t$'s are domains with
$\smoo^\infty$-smooth boundaries with no loss of generality. Note that the hypotheses of
Theorem~\ref{T:MainThm} do not preclude $\Psi_s(D_s)\cap \Psi_t(D_t)\neq\emptyset$
for some $t\neq s$. This actually happens in the papers cited above. 
We therefore have to control against multivaluedness --- which would require
{\em considerably more} machinery than is used in Section~\ref{S:funda} (and much, much
lengthier arguments) if, for instance, the $D_t$'s were allowed to have punctures. Secondly:
we point out to the reader how easy it is in the case of \cite{Bb01} to choose a compact
body $\Gamma$ --- wherever a Kontinuit{\"a}tssatz is invoked in \cite{Bb01} --- that has the
geometry given in Theorem~\ref{T:MainThm} and delivers the authors' conclusion.
\smallskip
 
In this proof, we shall use the notation $(\widetilde{\OM},p,j)$ to denote
the envelope of holomorphy of $\OM$, where $p$ is the canonical local 
homeomorphism into $\Cn$ that gives $\widetilde{\OM}$ the structure of an 
unramified domain spread over $\Cn$ and $j:\OM\hookrightarrow\widetilde{\OM}$
is an imbedding such that $p\circ j\equiv {\sf id}_{\OM}$. The only machinery 
that our proof of Theorem~\ref{T:MainThm} requires is Thullen's
construction of the envelope of holomorphy of $\OM$ 
\cite[Chapter 6, Theorem 1]{RNM01}. All other elements of our proof are
elementary and developed from scratch. 
\smallskip

Now, the usefulness of the Kontinuit{\"{a}}tssatz is that it gives us information 
about $p(\widetilde{\OM})$. Generally, we have very little information about how
$\widetilde{\OM}$ looks. But, if we have explicit information about the pair
$(\OM,K)$ occuring in Theorem~\ref{T:MainThm}, then we gain information
about $p(\widetilde{\OM})$. In our proof of Theorem~\ref{T:MainThm}, an
{\em explicit} description of $V$, in terms of 
$\OM,K$ and $\Psi_1(\overline{D}_1)$, is obtained.
\smallskip

If we have slightly more information about $(\widetilde{\OM},p,j)$ than is 
available in Theorem~\ref{T:MainThm}, then it should be possible to relax 
some of the conditions on the family $\{\Psi_t\}_{t\in[0,1]}$. For example: could 
we drop the requirement that $\Psi_t,t\in[0,1]$, be imbeddings\,? This leads to our 
next theorem.

\begin{theorem}\label{T:thm2}
Let $\OM$ be a domain in $\CC$, and let $(\widetilde{\OM},p,j)$ denote 
the envelope of holomorphy of $\OM$. Let $\Gamma$ be a compact body having exactly
the same properties as in Theorem~\ref{T:MainThm}
and let the domains $D_t$, $t\in [0,1]$, be the same as in Theorem~\ref{T:MainThm}. 
Let $K$ be a compact subset of $\OM$ and
suppose there is a continuous map $\Psi:\Gamma\rightarrow \CC$ satisfying 
$(1)$, the first part of $(2)$, and $(3)$ of Theorem~\ref{T:MainThm}.
Let us assume that $p$ is injective on $p^{-1}(\OM)$. Then there 
exists an open set $V\supset\Psi_{1}(\overline{D}_{1})\cup \OM$ such that for each 
$f\in H(\OM)$, there exists $G_f\in H(V)$ such that $G_f|_{\OM}\equiv f$.
\end{theorem}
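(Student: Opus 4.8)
The plan is to reduce Theorem~\ref{T:thm2} to Theorem~\ref{T:MainThm} by passing to the envelope of holomorphy and using the hypothesis that $p$ is injective on $p^{-1}(\OM)$ to replace the possibly non-embedded family $\{\Psi_t\}$ with a genuinely embedded one spread over $\widetilde{\OM}$. Concretely, I would first lift each $\Psi_t$ to a map into $\widetilde{\OM}$ near the boundary: since $\Psi_t(\partial D_t)\subset K\Subset\OM$ and $p$ is injective over $\OM$, the inclusion $j\colon\OM\hookrightarrow\widetilde{\OM}$ gives a canonical lift $\widetilde\Psi_t:=j\circ\Psi_t$ on a neighbourhood of $\partial D_t$ in $\overline{D}_t$ — indeed on all of $D_t$ wherever $\Psi_t$ lands in $\OM$. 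The point is to use the unramified-domain structure of $(\widetilde\OM,p,j)$ to continue this lift holomorphically across all of $D_t$: because $\widetilde\Psi_t$ is a holomorphic map of $D_t$ (a planar domain) into $\widetilde\OM$ defined near $\partial D_t$, one analytically continues it along paths in $D_t$ using the local-homeomorphism property of $p$, and the monodromy theorem (applicable since $D_t$ need not be simply connected, so here one argues slice-by-slice or uses that the continuation is forced by $p\circ\widetilde\Psi_t=\Psi_t$ together with an initial germ) yields a well-defined $\widetilde\Psi_t\colon\overline{D}_t\to\widetilde\OM$ with $p\circ\widetilde\Psi_t=\Psi_t$.

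Next I would assemble these slice-wise lifts into a single continuous map $\widetilde\Psi\colon\Gamma\to\widetilde\OM$ with $p\circ\widetilde\Psi=\Psi$. Continuity in $t$ is where the Hausdorff-continuity of $t\mapsto\overline{D}_t$ and the continuity of $\Psi$ on $\Gamma$ are used: a lift of $\Psi|_\Gamma$ through the covering-like map $p$ is determined by its value at a single point of the connected set $\Gamma$, and since $\Psi_0(\overline{D}_0)\subset\OM$ anchors the lift via $j$ on the slice $t=0$, the lifts on neighbouring slices are forced to agree. Technically I would cover $\Gamma$ by finitely many "plaques" over which $p$ has a continuous local inverse and patch, exactly as in the proof of unique path-lifting, but now over the body $\Gamma$ rather than an interval.

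With $\widetilde\Psi$ in hand, set $\widetilde K:=\widetilde\Psi\big(\cup_t\partial D_t\big)$, a compact subset of $\widetilde\OM$ with $p(\widetilde K)\subset K$ (in fact $=K$ under the first part of hypothesis $(2)$), and observe that $\widetilde\Psi_0(\overline{D}_0)=j(\Psi_0(\overline{D}_0))\subset j(\OM)$. Crucially, $\widetilde\Psi_1(\overline{D}_1)\nsubseteq j(\OM)$: if it were contained in $j(\OM)$, then applying $p$ and using injectivity of $p$ over $\OM$ would give $\Psi_1(\overline{D}_1)\subset\OM$, contradicting hypothesis $(3)$. Now I would like to apply Theorem~\ref{T:MainThm} with $\widetilde\OM$ playing the role of the ambient domain — but $\widetilde\OM$ is an abstract unramified domain, not a domain in $\Cn$, and the $\widetilde\Psi_t$ need not be imbeddings. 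This is the main obstacle, and the honest resolution is that one does not literally invoke Theorem~\ref{T:MainThm} as a black box but rather re-runs its proof (the machinery of Section~\ref{S:funda}, which is built from Thullen's construction and is intrinsic to the envelope): the key geometric input there is the existence of the lifted family and the "non-escape through $\widetilde K$" provided by hypothesis $(2)$, and the imbedding hypothesis in Theorem~\ref{T:MainThm} is only needed to identify the continued function uniquely on $V$ — a role now played instead by the injectivity of $p$ over $\OM$ and the structure of $\widetilde\OM$. Carrying this through produces an open $\widetilde V\subset\widetilde\OM$ containing $\widetilde\Psi_1(\overline{D}_1)\cup j(\OM)$ on which every $f\in H(\OM)$, viewed as $\widetilde f\in H(\widetilde\OM)$ via the envelope, restricts; then $V:=p(\widetilde V)$ — which contains $\Psi_1(\overline{D}_1)\cup\OM$ because $p$ is a homeomorphism on the relevant piece — together with $G_f:=\widetilde f\circ(p|_{\widetilde V})^{-1}$ (well-defined after possibly shrinking $\widetilde V$ so that $p$ is injective on it) gives the desired $G_f\in H(V)$ with $G_f|_\OM\equiv f$. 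I expect that verifying $p$ can be arranged injective on a neighbourhood $\widetilde V$ of $\widetilde\Psi_1(\overline{D}_1)\cup j(\OM)$ — equivalently, that the sheets of $\widetilde\OM$ over the relevant region do not collide — will require a careful argument using that $\widetilde\Psi_1(\overline{D}_1)$ is "grown" from $\widetilde K\subset j(\OM)$ by the continuation process, so that it stays on a single sheet; this, rather than the formal reduction, is where the real work lies.
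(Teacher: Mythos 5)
Your proposal stalls exactly where you admit the real work lies: you need $p$ to be injective on a neighbourhood of $\tilde{\Psi}_1(\overline{D}_1)\cup j(\OM)$, whereas the hypothesis only gives injectivity of $p$ on $p^{-1}(\OM)$. Your hoped-for argument --- that $\tilde{\Psi}_1(\overline{D}_1)$ ``stays on a single sheet'' because it is grown by continuation from $j(K)$ --- does not go through. Theorem~\ref{T:thm2} deliberately drops hypothesis $(4)$ of Theorem~\ref{T:MainThm}, so $\Psi_1$ need not be injective, and nothing in the continuation process prevents two points $\zt\neq\eta$ of $\overline{D}_1$ with $\Psi_1(\zt)=\Psi_1(\eta)$ from lifting to distinct points of $\widetilde{\OM}$, nor prevents $\tilde{\Psi}_1(\zt)$ from lying over a point of $\OM$ on a sheet other than $j(\OM)$. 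The missing ingredient is Jupiter's schlichtness theorem (Result~\ref{jupiter}, from \cite{DJ01}): for a domain $\OM\subset\mathbb{C}^2$, if $p$ is injective on $p^{-1}(\OM)$ then $p$ is injective on \emph{all} of $\widetilde{\OM}$. This is a nontrivial result special to dimension two --- note that Theorem~\ref{T:thm2} is stated only for $\OM\subset\mathbb{C}^2$, a restriction your argument never uses, which is a symptom of the absent input.

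Once schlichtness of $\widetilde{\OM}$ is available, the remainder is far shorter than your proposed re-run of Theorem~\ref{T:MainThm} inside $\widetilde{\OM}$. Proposition~\ref{KSTZ3} applies verbatim under the hypotheses of Theorem~\ref{T:thm2} and already contains your lifting step (carried out there via the Cartan--Thullen theorem and an open-closed argument in $t$, not by path-lifting --- $p$ is only a local homeomorphism, not a covering map, so unrestricted path-lifting and monodromy are not automatic). It yields $\tilde{\Psi}_1$ with $\tilde{\Psi}_1(\overline{D}_1)\subset(\widehat{K^*})_{\widetilde{\OM}}$, where $K^*=j(K)$; Cartan--Thullen then shows that $V:=\bigcup_{z\in\Psi_1(\overline{D}_1)}D^n\bigl(z,d_{\OM}(K)\bigr)\cup\OM$ is contained in $p(\widetilde{\OM})$; and $G_f:=F_f\circ p^{-1}$ is the desired extension, with no separate verification of sheet non-collision required.
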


Both theorems require a description of Thullen's construction of the 
envelope of holomorphy. This is provided in Section~\ref{S:notation}.
Also, we require a lifting result for the family $\{\Psi_t\}_{t\in[0,1]}$.
We state and proof this result in Section~\ref{S:funda}.
\medskip

\section{Notations and terminology}\label{S:notation}

We begin this section by observing that if $\OM$ is a domain in $\Cn$, then the envelope 
of holomorphy always exists. This was shown by Thullen --- see Chapter~6, Theorem~1 in 
\cite{RNM01}. Thullen's construction depends on the sheaf of $S$-germs on $\Cn$, denoted here 
as $\holo(S)$. Given a point $a\in \Cn$, an {\em $S$-germ at $a$} is the equivalence class
of the objects $(U,\phi_s:s\in S)$, where $U$ is an open set containing $a$; $\{\phi_s:s\in S\}$
is a family of functions, holomorphic on $U$, that are indexed by $S$; and
\begin{align}
 (U,\phi_s:s\in S)\,\thicksim\,(V,\psi_s:s\in S) \ \iff \ &\text{there exists a neighbourhood $W$ of $a$,} 
										\notag \\
 &\text{$W\subset U\cap V$ such that $\left.\phi_s\right|_W\equiv \left.\psi_s\right|_W \ 
 \text{for all}\,\, s\in S$.} \notag
\end{align}
The collection of all $S$-germs at $a$ is denoted by $\holo_a(S)$, and $\holo(S):= 
\cup_{a\in \Cn}\holo_a(S)$. We refer the reader to the first few pages of Chapter~6 of \cite{RNM01}
for the definition of $\holo(S)$ and the topology upon it that turns $(\holo(S),p,\Cn)$ 
into an unramified domain spread over $\Cn$ (here $p$ denotes the sheaf projection, i.e.
$p(x)=a\,\iff\,x\in \holo_a(S)$). 
\smallskip

We note here that since the letter $\holo$ will denote certain types of sheaves, we shall 
use $H(\OM)$ to denote the class of holomorphic functions on the domain $\OM$.
\smallskip

The case that is important for us here is when $S=H(\OM)$. Thullen's proof 
of the existence of the envelope of holomorphy $(\widetilde{\OM},p,j)$ 
is to realise this object as a subset of the sheaf $\holo(H(\OM))$. We summarise this 
construction as follows. {\bf Note:} throughout this paper, if 
$(U,\phi_f:f\in H(\OM))$ is a representative of an $H(\OM)$-germ at $a$, we shall 
denote that germ as $[U,\phi_f:f\in H(\OM)]_a$. We shall denote the 
canonical extension of $f$ to $\widetilde{\OM}$, $f\in H(\OM)$, as $F_{f}$.
\begin{itemize}
 \item[$(i)$] 
 $\widetilde{\OM}$ is realised as the connected component of $\mathcal{O}(H(\OM))$ (i.e. the 
 sheaf of $H(\OM)$-germs of holomorphic functions) containing the (connected) set
 $\{[\OM, f:f\in H(\OM)]_a: a\in \OM\}.$
 \item[$(ii)$]\label{defini} 
 Given any $f\in H(\OM)$, the function $F_f$ is defined on $\widetilde{\OM}$
 as follows. Each point $x\in \widetilde{\OM}$ is an 
 $H(\OM)$-germ, so there exists a neighbourhood $U$, containing 
 $a=p(x)\in \Cn$, such that 
 $x=\left[U, \phi_g:g\in H(\OM)\right]_a.$ Then $F_f(x):=\phi_f(a).$
\end{itemize}
\smallskip

The next two definitions are pertinent on any unramified domain 
$(X,p_0,\Cn)$ spread over $\Cn$. In what
follows, the notation $D^{n}(a,r)$ will denote the polydisc
$$
D^{n}(a,r) := \{z\in \Cn: |z_j-a_j|<r, \ j=1,\dots,n\}
$$
centered at the point $a=(a_1,\dots,a_n)$.

\begin{definition}\label{polyd}
Let $(X,p_0,\Cn)$ be an unramified domain spread over $\Cn$. Let $a\in X$. A 
{\em{polydisc of radius $r$ about $a$}} is a connected open set 
$U,\,a\in U$, such that $p_0|_{U}$ is an analytic isomorphism onto the set 
$D^{n}(b,r)$; here $b=p_0(a)=(b_1,b_2\ldots,b_n)$. We denote the set $U$ by $P(a,r)$. 
The {\em{maximal polydisc}} about $a$ is the union of all polydiscs about $a$. 
\end{definition}

As suggested by the last sentence above, the maximal polydisc is itself a polydisc about 
$a$, and its radius
$$
 r_0 = \sup\{r>0: P(a,r) \ \text{is a polydisc about $a$}\}.
$$
See \cite[Chapter 7, Lemma 4]{RNM01} for details.

\begin{definition}
Let $(X,p_0,\Cn)$ be an unramified domain spread over $\Cn$. Let $a\in X$.
The radius of the maximal polydisc about $a$ is called the {\em{distance of 
$a$}} from the boundary of $X$ and is denoted by $d_{X}(a)$. If $A\subset X$, 
we set
$$
d_{X}(A):=\text{inf}_{a\in A}d_{X}(a)
$$
\end{definition}
\medskip

\section{A fundamental proposition}\label{S:funda}

In this section, we shall prove a proposition that allows us to tackle both 
Theorem~\ref{T:MainThm} and Theorem~\ref{T:thm2} within the same framework. 
We point out to the reader that an essential part of the proof of the 
proposition below depends on the structure of Thullen's construction of the 
envelope of holomorphy $(\widetilde{\OM},p,j)$. The other important fact
that we use is the Cartan--Thullen characterisation of a domain of holomorphy.

\begin{result}[Cartan--Thullen]\label{Car-Thu}
Let $(X,p_0,\Cn)$ be an unramified domain over $\Cn$. If $X$ is a domain of holomorphy, then,
for any compact $K\Subset X$, $d_X(K)=d_X(\widehat{K}_{X})$.
\end{result}

\noindent{In the above result, $\widehat{K}_{X}$ denotes the {\em holomorphically convex hull} of $K$.
Recall that
$$
 \widehat{K}_{X} := \{x\in X: |f(x)|\leq \sup\nolimits_K|f| \;\, \text{for all}\,\, f\in H(X)\}.
$$}

We make one small notational point. In this section, if $z=(z_1,\dots,z_n)\in \Cn$, then
$$
 \|z\| := \max\{|z_1|,\dots,|z_n|\}.
$$

\begin{proposition}\label{KSTZ3}
Let $\OM$ be a domain in $\Cn$, $n\geq 2$, and let $(\widetilde{\OM},p,j)$ denote the 
envelope of holomorphy of $\OM$. Let $K$ be compact subset of $\OM$. Let $\Gamma$ 
be a compact body in $\mathbb{C}\times[0,1]$ having exactly the same 
properties as in Theorem~\ref{T:MainThm}. Let $\Psi:\Gamma\rightarrow\Cn$ be 
a continuous map satisfying conditions $(1)$, the first part of $(2)$, and $(3)$ from
Theorem~\ref{T:MainThm}. Then, for each $t\in [0,1]$, there exists a continuous map 
$\tilde{\Psi}_{t}:\overline{D}_{t}\rightarrow 
\widetilde{\OM}$ such that $p\circ\tilde{\Psi}_{t}\equiv\Psi_{t}$ and satisfies 
$\tilde{\Psi}_{t}(\zt)=[\OM,f: f\in H(\OM)]_{\Psi_{t}(\zt)}$ for every $\zt\in \bdy{D_{t}}$.
\end{proposition}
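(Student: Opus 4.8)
The plan is to construct the lifts $\tilde\Psi_t$ by analytic continuation along the parameter $t$, using the structure of Thullen's sheaf $\widetilde{\OM}\subset\holo(H(\OM))$ as the natural target in which "continuation" makes sense. First I would fix, for each $t$ and each $\zt\in\overline{D}_t$, the germ $\tilde\Psi_t(\zt)$ to be an $H(\OM)$-germ lying over $\Psi_t(\zt)\in\Cn$; the boundary condition forces $\tilde\Psi_t(\zt)=[\OM,f:f\in H(\OM)]_{\Psi_t(\zt)}$ whenever $\zt\in\partial D_t$ (this uses condition $(2)$: $\Psi_t(\partial D_t)\subset K\subset\OM$). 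At $t=0$ condition $(3)$ gives $\Psi_0(\overline{D}_0)\subset\OM$, so the tautological choice $\tilde\Psi_0(\zt):=[\OM,f:f\in H(\OM)]_{\Psi_0(\zt)}$ works and is visibly continuous. The task is to propagate this definition to all $t\in[0,1]$.

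The key step is a "maximal interval" argument. I would set
$$
T:=\{\tau\in[0,1]:\text{ there is a continuous }\tilde\Psi:\textstyle\bigcup_{t\in[0,\tau]}(\overline{D}_t\times\{t\})\to\widetilde\OM\text{ with }p\circ\tilde\Psi=\Psi\text{ and the boundary normalization}\}
$$
and show $T$ is nonempty, open, and closed in $[0,1]$. Nonemptiness is the $t=0$ case above. For openness and closedness the mechanism is the same: given a lift over $\overline{D}_\tau$, one uses the lower bound $d_{\widetilde\OM}(\tilde\Psi_\tau(\overline{D}_\tau))\ge\delta>0$ — here is where Result~\ref{Car-Thu} (Cartan--Thullen) enters, since $\widetilde\OM$ is a domain of holomorphy and the boundary germs lie in $j(\OM)$ where the distance to $\partial\widetilde\OM$ is controlled by $d_\OM$ on the compact $K$, while the interior is handled by passing to the holomorphically convex hull of $\tilde\Psi_\tau(\overline{D}_\tau)$ — to continue each germ along polydiscs $P(\tilde\Psi_\tau(\zt),\delta)$. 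Because $t\mapsto\overline{D}_t$ is Hausdorff-continuous and $\Psi$ is (uniformly) continuous on the compact $\Gamma$, for $|t-\tau|$ small every point $\Psi_t(\zt)$ lies in $p(P(\tilde\Psi_\tau(\zt'),\delta))$ for a nearby $\zt'\in\overline{D}_\tau$; I then \emph{define} $\tilde\Psi_t(\zt)$ to be the unique germ in that polydisc over $\Psi_t(\zt)$. One must check this is independent of the choice of nearby $\zt'$ (two overlapping polydiscs agree on the overlap by the identity theorem / the sheaf structure), that it is holomorphic in $\zt$ on $D_t$ (it is locally a composition of $\Psi_t$ with the holomorphic inverse of $p|_P$), that it is continuous jointly in $(\zt,t)$, and that it still satisfies the boundary normalization (the boundary germs stay in $j(\OM)$, and there the continuation of the tautological germ is again tautological, since $\Omega$ is open and the polydisc stays inside $\OM$ once $\delta$ is taken below $d_\OM(K)$).

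For closedness of $T$: if $[0,\tau)\subset T$, the uniform lower bound on $d_{\widetilde\OM}$ along the already-constructed lift (again via Cartan--Thullen, applied slice by slice, with the bound depending only on $d_\OM(K)$ and on a hull-distance estimate that is uniform in $t$ by compactness of $\Gamma$) lets one take a limit $t\uparrow\tau$: the germs $\tilde\Psi_t(\zt)$ converge in $\widetilde\OM$ because they all sit in polydiscs of a fixed size and $p\circ\tilde\Psi_t=\Psi_t\to\Psi_\tau$ uniformly, and the limit is the desired lift over $\overline{D}_\tau$, continuous up to and including $t=\tau$. Hence $T=[0,1]$, and in particular $\tilde\Psi_1$ exists; restricting the global lift to each slice gives the family $\{\tilde\Psi_t\}_{t\in[0,1]}$ asserted in the proposition, with the boundary property built in at every stage.

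I expect the main obstacle to be the well-definedness and regularity of $\tilde\Psi_t$ as a genuine \emph{map into the sheaf} — i.e. showing that the germ-valued assignment patched together from many local polydisc-continuations is single-valued, continuous in both variables, and holomorphic in $\zt$ — precisely because the hypotheses allow $\Psi_s(D_s)\cap\Psi_t(D_t)\neq\emptyset$, so one genuinely works in $\widetilde\OM$ rather than in $\OM$ and must rule out monodromy obstructions. The technical heart is the uniform positive lower bound on $d_{\widetilde\OM}(\tilde\Psi_t(\overline{D}_t))$ as $t$ ranges over the interval already covered; this is exactly what Cartan--Thullen supplies (via $d_{\widetilde\OM}(K)=d_{\widetilde\OM}(\widehat K_{\widetilde\OM})$ and the fact that $d_{\widetilde\OM}\ge d_\OM$ on $j(\OM)$), and it is what makes the open--closed argument go through without any further machinery.
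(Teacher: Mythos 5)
Your overall strategy coincides with the paper's: an open--closed argument in the parameter $t$, the tautological lift at $t=0$, the maximum principle placing $\tilde{\Psi}_t(\overline{D}_t)$ inside $\widehat{(j(K))}_{\widetilde{\OM}}$, Cartan--Thullen converting this into a polydisc of uniform radius $d_{\widetilde{\OM}}(j(K))\geq d_{\OM}(K)>0$ about every point of the lift, and the extension to nearby parameter values by reading germs off these polydiscs. However, there is a genuine gap at the step you dispose of with ``two overlapping polydiscs agree on the overlap by the identity theorem / the sheaf structure.'' On a Riemann domain spread over $\Cn$ this is exactly what can fail: the polydiscs $P(\tilde{\Psi}_\tau(\zeta'),d(K^*))$ and $P(\tilde{\Psi}_\tau(\zeta''),d(K^*))$ project onto overlapping polydiscs in $\Cn$, but nothing forces them to intersect in $\widetilde{\OM}$ --- they may lie over different sheets, in which case the two candidate germs at $\Psi_t(\zeta)$ are genuinely different and your $\tilde{\Psi}_t$ is not well defined. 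The identity theorem only applies once you already know the two representative tuples $\phi^{\tau,\zeta'}_f$ and $\phi^{\tau,\zeta''}_f$ agree somewhere on the overlap, and establishing that is the crux.

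The paper resolves this (its Fact~2) by first producing a radius $r>0$, uniform in $t$ and $\zeta$, such that $D(\zeta,r)\cap\overline{D}_t$ is \emph{connected}, and then running a chain-of-small-discs induction inside that set, using the already-established continuity of $\tilde{\Psi}_\tau$ to propagate agreement of representatives from $\zeta'$ to $\zeta''$. The uniform connectedness radius is precisely where the $\smoo^2$-smoothness of the $\partial D_t$ and the Hausdorff continuity of $t\mapsto\overline{D}_t$ enter; your proposal never invokes these hypotheses, which is the tell-tale sign of the missing step. You do flag ``monodromy obstructions'' as the expected obstacle, but you then locate the technical heart in the Cartan--Thullen lower bound; in fact that bound is the easy half (it is the paper's first Claim plus Fact~1), and the monodromy issue is closed only by the connectedness-plus-chaining argument, not by an appeal to the identity theorem or to uniqueness of germs in a single polydisc. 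A secondary, fixable imprecision: in your limiting argument as $t\uparrow\tau$ the expression $\tilde{\Psi}_t(\zeta)$ need not make sense for $\zeta\in\overline{D}_\tau$ since the domains vary; one must pass to nearby points $x(\zeta)\in\overline{D}_{t_0}$, and the well-definedness of that choice is again the same Fact~2 issue.
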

\begin{proof}
Define the set
\begin{align} 
S := &\{t\in [0,1]: \exists\,\tilde{\Psi}_{t'}:\overline{D}_{t'}\rightarrow \widetilde{\OM}, \,\tilde 
{\Psi}_{t'} \in \smoo(\overline{D}_{t'}),\, 
p\circ\tilde{\Psi}_{t'}\equiv \Psi_{t'} \ \text{and} \notag \\ 
	& \tilde{\Psi}_{t'}(\zt)=[\OM,f: f\in H(\OM)]_{\Psi_{t'}(\zt)}\ \text{for all}\ \ 
\zt\in \bdy{D_{t'}}\ \ \text{and for all}\ \ t'\in[0,t]\}. \notag 
\end{align}
We shall show that $S$ is both open and closed in $[0,1]$ and $S\neq\emptyset$. To begin, 
define $\tilde{\Psi}_0:\overline{D}_0 \rightarrow \widetilde{\OM}$ by 
\begin{eqnarray}\label{defn 0}
\tilde{\Psi}_0(\zt):=[\OM,f:f\in H(\OM)]_{\Psi_0(\zt)}\ \text{for all}\ \ \zt\in \overline{D}_0.
\end{eqnarray}
Since $\Psi_0(\overline{D}_0)\subset\OM$, $(\ref{defn 0})$ is well-defined. Continuity is 
easy. Therefore $0\in S$.
\smallskip

Let $\tau=\sup S$. By the definition of $S$ we have that $[0,\tau)\subset S$. So to show that 
$S$ is closed it is therefore enough to show that $\tau\in S$. For this purpose, let us consider the 
envelope of holomorphy, $(\widetilde{\OM},p,j)$. We recall an important aspect of the construction 
of $\widetilde{\OM}$ (given at the beginning of Section~\ref{S:notation}). For each $f\in H(\OM)$, 
its canonical extension to $\widetilde{\OM}$ is described as follows: 
\begin{itemize}
\item[$(**)$] For each point $x\in \widetilde{\OM}$, which is an $H(\OM)$-germ, let 
$a=p(x)\in\Cn$ and let $U$ be a neighbourhood of $a$ such that 
$x=[U,\phi_g:g\in H(\OM)]_a$. Then $F_f(x)=\phi_f(a)=\phi_f(p(x))$.
\end{itemize}
\smallskip

\noindent{{\bf Claim:} {\em If $t\in S$, then $\tilde{\Psi}_t(\overline{D}_t)\subset 
(\widehat{K^*})_{\widetilde{\OM}}$ 
where $K^*=j(K)$.}}

\noindent{{\em Proof of claim:} Let $F\in H(\widetilde{\OM})$. As $p\circ\tilde{\Psi}_t=\Psi_t$, 
both $p$ and 
$\Psi_t$ are holomorphic, and $p$ is a local isomorphism (biholomorphism) we have 
$\tilde{\Psi}_t\in H(D_t)$ (by the definition of holomorphicity of maps with values in an 
unramified domain spread over $\Cn$). 
Therefore $F\circ\tilde{\Psi}_t\in H(D_t)\cap\smoo(\overline{D}_t)$. 
Let $\zt\in D_t$. By the maximum modulus principle 
\begin{eqnarray}\label{hatinclu}
|F(\tilde{\Psi}_t(\zt))|
= |F\circ\tilde{\Psi}_t(\zt)|
\leq \sup\nolimits_{\bdy{D_t}}|F\circ\tilde{\Psi}_t| 
= \sup\nolimits_{\tilde{\Psi}_t(\bdy{D_t})}|F|
\leq \sup\nolimits_{K^*}|F|,
\end{eqnarray}
because $\tilde{\Psi}_{t}(\bdy D_{t})\subset K^*$ (this is a consequence of property $(i)$, 
stated in Section~\ref{S:notation}, of the envelope of holomorphy). Thus 
$\tilde{\Psi}_t(\overline{D}_t)\subset (\widehat{K^*})_{\widetilde{\OM}}$. Hence the claim.}
\smallskip

For any $\zt\in\overline{D}_t$ and $t\in S$, let us denote 
\begin{eqnarray}\label{repn.}
\tilde{\Psi}_t(\zt)=[U^{t,\zt},\,\phi^{t,\zt}_f:f\in H(\OM)]_{\Psi_t(\zt)} 
\end{eqnarray}
where $U^{t,\zt}$ is some neighbourhood of $\Psi_t(\zt)$ and $\phi^{t,\zt}_f\in H(U^{t,\zt})$.
\smallskip

In what follows, we shall abbreviate $d_{\widetilde{\OM}}(K^*)$ to $d(K^*)$. There exists an 
$\epsilon >0$, {\em a priori} $\epsilon=\epsilon(t,\zt)$, such that all the $H(\OM)$-germs in 
the definition below makes sense and the open set 
\begin{eqnarray}\label{basicnbd}
\mathcal{N}(\epsilon,t,\zt):=\{[U^{t,\zt},\,\phi_{f}^{t,\zt}:f\in H(\OM)]_a 
:a\in D^n(\Psi_{t}(\zt),\epsilon)\}
\end{eqnarray}
is contained in $\widetilde{\OM}$. As 
$\widetilde{\OM}$ is a domain of holomorphy, it follows from the above claim and from the
Cartan--Thullen Theorem that $P(\tilde{\Psi}_t(\zt),d(K^*))\subset\widetilde{\OM}$  for all 
$t\in S$ and for all $\zt\in \overline{D}_t$. Here $P(\tilde{\Psi}_t(\zt),d(K^*))$ 
is as explained in Definition~\ref{polyd}. It follows from the observation following 
Definition~\ref{polyd} that $p|_{P(\tilde{\Psi}_t(\zt),\,d(K^*))}$ is invertible. Thus, the functions 
\begin{eqnarray}\label{formula}
\Phi_f^{t,\zt}:=F_f\circ {(p|_{P(\tilde{\Psi}_t(\zt),\,d(K^*))})}^{-1} 
\end{eqnarray}
are holomorphic functions on $D^n(\Psi_t(\zt), d(K^*))$. Given our description 
$(**)$ above of the construction of $F_f$, the above statement gives us the 
following equality of $H(\OM)$ germs:
$$
[U^{t,\zt},\,\phi_f^{t,\zt}: f\in H(\OM)]_{\Psi_t(\zt)}=
[D^n(\Psi_t(\zt), d(K^*)),\,\Phi_f^{t,\zt}:f\in H(\OM)]_{\Psi_t(\zt)}.
$$
We summarise this as the following:
\medskip

{\addtolength{\leftskip}{6mm}

\noindent{{\bf Fact 1:} For any $t\in S$ and $\zt\in \overline{D}_t$,  
 $(D^n(\Psi_t(\zt), d(K^*)),\,\Phi_f^{t,\zt}:f\in H(\OM))$ is a representative of the 
 $H(\OM)$-germ $\tilde{\Psi}_t(\zt)$, where $\Phi_f^{t,\zt}$ is the function given by 
 (\ref{formula}).}

}
\medskip

\noindent{Given this fact, the open set $\mathcal{N}(\epsilon,t,\zt)$ makes sense for any number 
$\epsilon$ such that $0 < \epsilon\leq d(K^*)$, and is contained in $\widetilde{\OM}$ for all 
$\zt\in\overline{D}_t$, for all $t\in S$. Furthermore, it will be understood that, for the remainder 
of this proof, {\em $U^{t,\zt}=D^n(\Psi_t(\zt), d(K^*))$ and $\phi_f^{t,\zt}=\Phi_f^{t,\zt}$ 
whenever referring to the representation \eqref{repn.} for $\tilde{\Psi}_t$ for $t\in S$.}}
\smallskip

As $\Psi:\Gamma\rightarrow \Cn$ is a continuous map and $\Gamma$ 
is compact, there exists $\delta>0$ such that for $(\zt,t_1), (\eta,t_2)\in \Gamma$,
\begin{eqnarray}\label{cont}
|(\zt, t_1)-(\eta, t_2)| < \delta \ \Rightarrow \ 
\|\Psi_{t_1}(\zt)-\Psi_{t_2}(\eta)\| < d(K^*), 
\end{eqnarray} 
where, for any $(\zt,t)\in \cplx\times[0,1]$, $|(\zt,t)|:=\sqrt{|\zt|^2+t^2}$.
\smallskip

In the following argument $D(a,R)$ will denote the disc in $\cplx$ with 
centre $a$ and radius $R$. For $t\in [0,1]$ and $\alpha > 0$, set $\mathcal{A}_{t,\alpha}:= 
\{\zt\in \cplx: {\rm dist}[\zt,\bdy{D_t}]<\alpha\}$. By our hypotheses: 
\begin{itemize}
 \item As $\bdy{D_t}$ is $\smoo^2$-smooth, for each $t\in [0,1]$ there exists an 
 $\epsilon > 0$, which depends on $t$, such that for each $z\in \mathcal{A}_{t,\epsilon}$ 
 there is a unique closest point 
 in $\bdy{D_t}$, and such that each connected component of $\bdy{D_t}$ is a strong 
 deformation retract of the component of $\mathcal{A}_{t,\epsilon}$ containing it. 
 Let $\epsilon^*=\epsilon^*(t)$ be the largest $\epsilon > 0$ for which $\mathcal{A}_{t,\epsilon}$ 
 has these properties. 
 \item By the properties of $\partial_*\Gamma$, given $\alpha > 0$ small, there exists a 
 $\delta_*=\delta_*(\alpha,t) > 0$ such that $\mathcal{A}_{t,\epsilon^*(t)-\alpha/2}\subset 
 \mathcal{A}_{s,\epsilon^*(s)}$ and $\partial D_s\subset \mathcal{A}_{t,\alpha/2}$ for 
 every $s\in [0,1]\cap [t-\delta_*,t+\delta_*]$.
\end{itemize}
From this, it follows that $\epsilon^*$ is lower semicontinuous. 
It now follows from compactness of $[0,1]$ and an elementary argument that there exists a 
constant $r$, $0<r\leq \delta$, such that for each $t\in [0,1]$ and 
each $\zt\in \overline{D}_t$, $D(\zt,r)\cap \overline{D}_t$ is connected. 
Let us now {\em fix} a $t\in S$. By continuity, there exists a constant $\delta_t$, 
$0<\delta_t\leq r$, depending only on $t$, such that 
$$
\eta\in D(\zt, \delta_t)\cap\overline{D}_{t} \ \Rightarrow \  
\tilde{\Psi}_t(\eta)\in \mathcal{N}(d(K^*),t,\zt).
$$
Therefore 
\begin{eqnarray}\label{inside}
\eta\in D(\zt, \delta_t)\cap\overline{D}_{t} \ \Rightarrow
\tilde{\Psi}_t(\eta)=[U^{t,\zt},\,\phi^{t,\zt}_{f}: f\in H(\OM)]_{\Psi_t(\eta)}.
\end{eqnarray}
Let us now fix $\zt\in \overline{D}_t$. Set $A(t,\zt):= D(\zt,r)\cap \overline{D}_t$. 
Due to connectedness, for any $x\in A(t,\zt)$, there
is a finite chain of discs $\Delta_1,\Delta_2,\dots,\Delta_{N(x)}$ of radius $\delta_t$ 
and points $\zt = y_0,\,y_1,\dots,y_{N(x)}=x$ such that 
$$
y_j=\text{centre of}\;\Delta_{j+1}, \qquad
y_{j+1}\in \Delta_{j+1}\cap A(t,\zt) \;\;\text{for $j=0,1,\dots,N(x)-1$.}
$$
Suppose we have been able to show that for any $x\in A(t,\zt)$ that can be 
linked to $\zt$ by a chain of at most $M$ discs in the above manner, we have
$$
\tilde{\Psi}_t(x)=[U^{t,\zt},\,\phi^{t,\zt}_{f}: f\in H(\OM)]_{\Psi_t(x)}.
$$ 
Now let $z\in A(t,\zt)$ be a point that is linked to $\zt$ by a chain 
$\Delta_1,\Delta_2,\dots,\Delta_{M+1}$ of discs of radius $\delta_t$ in the
above manner. In keeping with the above notation, let $y_M$ be the 
centre of $\Delta_{M+1}$. By our inductive hypothesis
$$
\tilde{\Psi}_t(y_M)=[U^{t,\zt},\,\phi^{t,\zt}_{f}:f\in H(\OM)]_{\Psi_t(y_M)}.
$$
By the representation (\ref{repn.}) and the fact that $U:=U^{t,\zt}\cap U^{t,y_M}$ is convex (see
the remarks following Fact~1),
\begin{eqnarray}\label{equal}
\phi^{t,\zt}_{f}|_{U}\equiv \phi^{t,y_M}_{f}|_{U} \; \text{for all} \; f\in H(\OM).
\end{eqnarray}
Now applying (\ref{inside}) to the pair $(y_M,z)$, we get 
\begin{align}
 \tilde{\Psi}_t(z) &= [U^{t,y_M},\,\phi^{t,y_M}_{f}: f\in H(\OM)]_{\Psi_t(z)} \notag \\
		   &= [U,\,\phi^{t,\zt}_{f}: f\in H(\OM)]_{\Psi_t(z)}, \notag
\end{align}
which follows from~(\ref{equal}) and the fact that, as $|z-y_M|<\delta_t\leq \delta$, 
$\Psi_t(z)\in U$. From the last equality, we get 
$\tilde{\Psi}_t(z)=[U^{t,\zt},\,\phi^{t,\zt}_{f}: f\in H(\OM)]_{\Psi_t(z)}$. Given 
(\ref{inside}), mathematical induction tells us that we have actually established the following:
\medskip

{\addtolength{\leftskip}{6mm}
\noindent{{\bf Fact 2:} For any $t\in S$ and $\zt\in \overline{D}_t$,
\begin{eqnarray}\label{agree} 
\phi^{t,\zt}_f|_{X(\zt,\eta)} \equiv \phi^{t,\eta}_f|_{X(\zt,\eta)}\;\text{for all $f\in H(\OM)$}, 
\end{eqnarray}
and for any $\eta\in \overline{D}_t$ such that $|\eta-\zt|<r$, 
where $X(\zt,\eta)= U^{t,\zt}\cap U^{t,\eta}$.}

}
\medskip

Now we have that $\{\overline{D}_t\}_{t\in[0,1]}$ is a continuous family (hence a uniformly continuous 
family) with respect to the Hausdorff metric. From uniform continuity, the fact that $[0,\tau)\subset S$, 
and from the properties of $\partial_*\Gamma$, we can find a $t_0\in S$ such that 
$(\tau-t_0)\leq \newfrc{r}{4}$ and such that
$$
 \overline{D}_\tau\subset B(\overline{D}_{t_0},\newfrc{r}{4}), \qquad
 \bdy{D}_\tau\subset B(\bdy{D}_{t_0},\newfrc{r}{4}),
$$
where, given a set $E\subseteq \cplx$ and $C>0$,  $B(E,C):= \cup_{\zt\in E}D(\zt,C)$. 
Let $\zt\in\overline{D}_\tau$. Then there exists $x(\zt)\in\overline{D}_{t_0}$ such that 
$|\zt-x(\zt)|<\newfrc{r}{4}$. Define
$$
\tilde{\Psi}_\tau(\zt):=[U^{t_0,x(\zt)},\phi^{t_0,x(\zt)}_f:f\in H(\OM)]_{\Psi_\tau(\zt)}.
$$
As $r\leq \delta$, it follows from Fact\,1 and (\ref{cont}) that 
$\tilde\Psi_\tau(\zt)\in\widetilde{\OM}$. 
\smallskip

\noindent{{\bf Claim:} {\em The function $\tilde\Psi_\tau:\overline{D}_\tau\rightarrow\widetilde{\OM}$ 
is well-defined and continuous.}}

\noindent{{\em Proof of claim:} Let $\zt\in\overline{D}_\tau$. Suppose $x(\zt), \ y(\zt)$ are 
two different points in $\overline{D}_{t_0}$ such that 
$|\zt-x(\zt)|<\newfrc{r}{4}$, $|\zt-y(\zt)|<\newfrc{r}{4}$. 
Then $|x(\zt)-y(\zt)|<\newfrc{r}{2}$. As $(\tau-t_0)\leq\newfrc{r}{4}$, it follows 
from (\ref{cont}) that $\Psi_\tau(\zt)\in U^{t_0,x(\zt)}\cap U^{t_0,y(\zt)}$. Therefore, 
by $(\ref{agree})$ 
$$
[U^{t_0,x(\zt)},\,\phi^{t_0,x(\zt)}_f:f\in H(\OM)]_{\Psi_\tau(\zt)}=
[U^{t_0,y(\zt)},\,\phi^{t_0,y(\zt)}_f:f\in H(\OM)]_{\Psi_\tau(\zt)}.
$$
Therefore $\tilde{\Psi}_\tau$ is well-defined.}
\smallskip

We now pick a $\zt\in\overline{D}_\tau$ and fix it. By the construction of the topology on 
$\widetilde{\OM}$, given any neighbourhood $G$ of $\tilde{\Psi}_\tau(\zt)$ in 
$\widetilde{\OM}$, there exists $\epsilon\in(0, d(K^*))$, sufficiently small, such that 
$\mathcal{N}(\epsilon,\tau,\zt)\subset G$. Now let $0<\epsilon<d(K^*)$. Since 
$\Psi_\tau:\overline{D}_\tau\rightarrow\Cn$ is continuous there exists 
$\sigma=\sigma(\epsilon)$, with $\sigma(\epsilon)\in(0,\newfrc{r}{4})$, such that 
if $|\eta-\zt|<\sigma, \ \eta\in\overline{D}_\tau$, then 
$\Psi_\tau(\eta)\in D^n(\Psi_\tau(\zt), \epsilon)$. Let 
$\eta\in\overline{D}_\tau$ such that $|\eta-\zt|<\sigma$. There exists $x(\eta)\in\overline{D}_{t_0}$ 
such that $|\eta-x(\eta)|<\newfrc{r}{4}$. Therefore $|x(\zt)-x(\eta)|<\newfrc{3r}{4}$.
Recall that $(\tau-t_0)\leq \newfrc{r}{4}$. Therefore, applying (\ref{cont}) and (\ref{agree}) 
we get $[U^{t_0,x(\zt)},\phi_f^{t_0,x(\zt)}:f\in H(\OM)]_{\Psi_{\tau}(\eta)}=
[U^{t_0,x(\eta)},\phi_f^{t_0,x(\eta)}:f\in H(\OM)]_{\Psi_{\tau}(\eta)}$. Therefore 
$\tilde{\Psi}_\tau(\eta)\in\mathcal{N}(\epsilon,\tau,\zt)$ whenever 
$|\zt-\eta|<\sigma$ and $\eta\in\overline{D}_\tau$. By the remarks at the begining of this 
paragraph, $\tilde{\Psi}_\tau$ is continuous at $\zt$. As $\zt$ is an arbitrary point of 
$\overline{D}_\tau$ we have that $\tilde{\Psi}_\tau$ is continuous. Hence the claim.
\smallskip

By definition of $\tilde\Psi_\tau$, $p\circ\tilde\Psi_\tau\equiv\Psi_\tau$. For all 
$x\in\bdy{D_{t_0}}$, $\tilde\Psi_{t_0}(x)=[\OM,f:f\in H(\OM)]_{\Psi_{t_0}(x)}$. 
Since $\bdy{D}_\tau\subset B(\bdy{D}_{t_0},\newfrc{r}{4})$, for any $\zt\in 
\bdy{D}_\tau$, there exists an $x(\zt)\in \bdy{D_{t_0}}$ such that $\zt\in D(x(\zt),\newfrc{r}{4})$. 
By the argument in the first part of the previous claim,
$\tilde\Psi_{\tau}(\zt)=[\OM,f:f\in H(\OM)]_{\Psi_{\tau}(\zt)}$ for all 
$\zt\in\bdy{D_{\tau}}$. This implies that $\tau\in S$.
\smallskip

We have established that $S$ is closed.
\smallskip

The above method for showing that $\tau\in S$ tells us more. As $[0,1]\ni t\longmapsto \overline{D}_t$  
is uniformly continuous relative to the Hausdorff metric, and as $\partial_*\Gamma$ is smooth,
there exists a constant $\delta^\prime>0$ such that
\begin{align}
 |t-s|<\delta^\prime \ &\Rightarrow \ \mathcal{H}(\overline{D}_t,\overline{D}_s)<\newfrc{r}{4}, \notag \\
 0<(t-s)<\delta^\prime \ &\Rightarrow \ \bdy{D}_t \in B(\bdy{D}_s,\newfrc{r}{4}), \notag
\end{align}
where $\mathcal{H}$ denotes the Hausdorff metric and $r$ is exactly as fixed above.
Let $\delta^*:=\min(\delta^\prime,\newfrc{r}{4})$. 
The same argument, with appropriate replacements where necessary, shows that if 
$t_0\in S$, then $\tau\in S$ for each $\tau\in[t_0, \text{min}(1, t_0+\delta^*))$.
Therefore $S$ is both open and closed. We have shown that $0\in S$. Hence $S=[0,1]$, which 
completes the proof.
\end {proof}
\medskip

\section{The proof of the main theorems}\label{S:proof}
\begin{proof}[The proof of Theorem~\ref{T:MainThm}]
By Proposition~\ref{KSTZ3} we have that there exists 
$\tilde{\Psi}_1:\overline{D}_1\rightarrow\widetilde{\OM}$, a continuous 
map such that $p\circ\tilde{\Psi}_1\equiv\Psi_1$ and 
$\tilde{\Psi}_1(\bdy D_1)\subset K^*$. The condition $p\circ\tilde{\Psi}_1\equiv\Psi_1$ 
actually implies that $\tilde{\Psi}_1\in H(D_1)$, by the definition of holomorphicity of maps 
with values in an unramified domain spread over $\Cn$. Thus, as 
argued in the proof of Proposition~\ref{KSTZ3},  
$\tilde{\Psi}_1(\overline{D}_1)\subseteq(\widehat{K^*})_{\widetilde{\OM}}$.
Therefore, by Result~\ref{Car-Thu}, 
$P(\tilde{\Psi}_1(\zt),d(K^*))\subseteq\widetilde{\OM}$ for all $\zt\in\overline{D}_1$. 
Let us define for any $r>0$,
$$
\omega(r):=\{\zt\in D_1:{\text{dist}}[\zt,\bdy D_1]>r\}.
$$

\noindent{{\bf Claim:} {\em There exists $\ent\in(0,d_{\OM}(K)/4)$ such that if 
$\zt,\eta\in\overline{D}_1$, and $D^{n}(\Psi_1(\zt),\ent)\cap D^{n}(\Psi_1(\eta),\ent)\neq\emptyset$, 
then 
\begin{align}
 \text{either} \; \ & D^{n}(\Psi_1(\zt),\ent)\cup D^{n}(\Psi_1(\eta),\ent)\subset 
 \cup_{z\in \Psi_1(\partial D_1)}D^n(z,d_{\OM}(K))\subset \OM, \notag \\
 \text{or} \; \ & (D^{n}(\Psi_1(\zt),2\ent)\cap D^{n}(\Psi_1(\eta),2\ent) 
 \cap\Psi_1(\overline{D}_1)\neq\emptyset \ \text{and} \notag \\ 
 & D^{n}(\Psi_1(x),2\ent)\cap \Psi_1(\overline{D}_1), \ \text{$x=\zt,\eta$, is path-connected.}) 
 \label{2nd}
\end{align}}}
\noindent{{\em Proof of claim:} Let us fix an 
$\tilde{\epsilon}\in(0,d_{\OM}(K))$, whence 
$D^{n}(z,\tilde{\epsilon})\subseteq\OM$ for all $z\in K$. As 
$\Psi_1$ is continuous on $\overline{D}_1$,
which is a compact set, there exists $\delta>0$ such that 
$\Psi_1(\overline{D}_1\setminus\omega(2\delta))\subset
\cup_{z\in \Psi_1(\partial D_1)}D^{n}(z,\newfrc{\tilde{\epsilon}}{4})$.
By construction we see that for any $\epsilon\in(0,\newfrc{\tilde{\epsilon}}{4}]$,
if $\zt\in\overline{D}_1\setminus\omega(2\delta)$ and $\eta\in\overline{D}_1$,
then
\begin{multline}\label{union}
D^{n}(\Psi_1(\zt),\epsilon)\cap D^{n}(\Psi_1(\eta),\epsilon)\neq\emptyset \\
\implies\,D^{n} (\Psi_1(\zt),\epsilon)\cup D^{n}(\Psi_1(\eta),\epsilon)\subset
\cup_{z\in \Psi_1(\partial D_1)}D^n(z,d_{\OM}(K)) \subset \OM.
\end{multline}
The above statement remains true when $\zt$ and $\eta$ are interchanged.}
\smallskip

Let us view $\Cn$ as a Hermitian manifold with $T(\Cn)$ as the holomorphic 
tangent space equipped with the standard Hermitian inner product $\langle\bdot\,,\bdot\rangle_{std}$
on each $T_{p}\Cn$,\,\,$p\in\Cn$. Let
$$
N_{\Psi_1}:= {\text{the normal bundle of}}\,\,\Psi_1(D_1)\,\, 
{\text{with respect to}}\,\,\langle\bdot\,,\bdot\rangle_{std}.
$$
Let $\pi$ denote the bundle projection. It is well known that for any relatively compact subdomain 
$\Delta\Subset D_1$, there 
exists 
$r_\Delta>0$, such that, if we define:
$$
N(\Psi_1,r_\Delta):=
\bigcup\nolimits_{p\in\Psi_1(\Delta)}\{v\in \left.N_{\Psi_1}\right|_p:
\langle v,v\rangle_{std}\,<r_{\Delta}^2\},
$$
then the map $\Theta(v):=\pi(v)+ v$ is a holomorphic imbedding of 
$N(\Psi_1,r_\Delta)$ into $\Cn$.
\smallskip 

Write $\Delta:=\omega(\newfrc{\delta}{2})$ and $\omega:=\omega(\delta)$. 
Since $\Psi_1(\overline{\omega})$ is a compact 
subset of $\Cn$, there exists an $\ent\in (0,\newfrc{\tilde{\epsilon}}{4})$ so small that:
\begin{itemize}
 \item $D^{n}(z,\ent)\cap(\cup_{w\in K}D^{n}(w,\ent))=\emptyset$ for all $z\in 
 \Psi_1(\overline{\omega})$;
 \item $D^{n}(z,2\ent)\cap \Psi_1(\overline{D}_1)$ is path-connected  
 for all $z\in \Psi_1(\Delta)$;
 \item $D^{n}(z,2\ent)\cap\Psi_1(\Delta)\Subset \Psi_1(\Delta)$ (with respect 
 to the relative topology on $\Psi_1(\Delta)$) for all $z\in\Psi_1(\overline{\omega})$;
 \item $D^{n}(z,\ent)\subseteq\Theta(N(\Psi_1,r_\Delta))$ for all 
 $z\in\Psi_1(\overline{\omega})$; and
 \item $\pi\circ\Theta^{-1}(z)\in D^{n}(p,2\ent)\cap\Psi_1(\Delta)$ for all
 $z\in D^{n}(p,\ent)$ and for all $p\in\Psi_1(\overline{\omega})$
\end{itemize}
(recall that $\Theta:N(\Psi_1,r_\Delta)\rightarrow\Cn$). The first condition
relies on our assumption on $K$.
\smallskip

Now suppose $\zt\neq \eta\in\Psi_1(\overline{\omega})$ and 
$D^{n}(\zt,\ent)\cap D^{n}(\eta,\ent)\neq\emptyset$. Let 
$w\in D^{n}(\zt,\ent)\cap D^{n}(\eta,\ent)$. Then, as $\Theta$ is an imbedding,
there is a unique $v\in N(\Psi_1,r_\Delta)$ such that $\Theta(v)=w$. By our 
construction 
$\pi(v)=\pi\circ\Theta^{-1}(w)\in D^{n}(\zt,2\ent)\cap D^{n}(\eta,2\ent)\cap \Psi_1(\Delta)$. 
Combining this with the statement culminating in (\ref{union}), we have the claim.
\smallskip

Write:
$$
 V:=\left(\cup_{\zt\in\overline{D}_1}D^{n}(\Psi_1(\zt),\ent)\right) 
 \cup\left(\cup_{z\in K}D^{n}(z,\ent)\right),\quad
 W:=\cup_{z\in K}D^{n}(z,\ent).
$$
Clearly, $W\subset V\cap\OM$ as $\ent<d_{\OM}(K)$ and $V\nsubseteq\OM$ 
as $\Psi_1(\overline{D}_1)\subseteq V$. Let us use the notation 
(\ref{repn.}) to represent $\tilde{\Psi}_1(\zt)$ (which is an $H(\OM)$-germ). 
We have established in the proof of Proposition~\ref{KSTZ3} --- see Fact\,1 
stated in Section~\ref{S:funda} --- that there is a representative 
$(U^{1,\zt},\,\phi^{\zt}_{f}:f\in H(\OM))$ of $\tilde{\Psi}_1(\zt)$ such that 
$U^{1,\zt}=D^{n}(\Psi_1(\zt),d(K^*))$. Given any $f\in H(\OM)$, define $G_{f}:V\rightarrow\cplx$ by
$$
 G_{f}(z) := \begin{cases}
		\phi^{\zt}_{f}(z), &\text{if $z\in D^{n}(\Psi_1(\zt),\ent)$}, \\
		f(z), &\text{if $z\in D^n(\theta,\ent)$ for some
		$\theta\in K\setminus\Psi_1(\overline{D}_1)$}.
		\end{cases}
$$
Here, for simplicity of notation, we write $\phi^{\zt}_{f}:=\phi^{1,\zt}_{f}$.
\smallskip

We shall prove that the function $G_f$ is well-defined on $V$. To start with, let us assume 
that for some $z\in V$, $z\in D^{n}(\Psi_1(\zt),\ent)\cap D^{n}(\Psi_1(\eta),\ent)$, where 
$\zt\neq\eta$. We make use of the claim above. We see from the proof of this claim that if any
{\em one} of $\zt$ or $\eta$ is in $\overline{D}_1\setminus \omega(2\delta)$, then the other point
is sufficiently close to (or on) $\partial D_1$ to imply, by  Proposition~\ref{KSTZ3}, that
$\phi^{\zt}_{f}=f=\phi^{\eta}_{f}$. By the proof of the above claim, we see that it remains
to consider those $(\zt,\eta)$ that lead to the outcome (\ref{2nd}) above. In this situation, there exists 
$\tau\in D_1$ such that $\Psi_1(\tau)\in D^{n}(\Psi_1(\zt),2\ent)\cap D^{n}(\Psi_1(\eta),2\ent)$. 
The {\em same} argument that leads to Fact\,2 stated in the proof of Proposition~\ref{KSTZ3} can be 
used to show the following analogue of Fact\,2:

{\addtolength{\leftskip}{6mm}
\noindent{{\bf Fact $\boldsymbol{2^\prime}$:} 
For any $t\in [0,1]$ and $\zt\in \overline{D}_t$, if $A_\zt:=$ the 
path-component of $\Psi^{-1}_t(U^{t,\zt})$ containing $\zt$, then
$$ 
 \phi^{t,\zt}_f|_{X(\zt,\eta)} \equiv \phi^{t,\eta}_f|_{X(\zt,\eta)} \; \text{for all $f\in H(\OM)$ 
 and for all $\eta\in A_\zt$}, 
$$
where $X(\zt,\eta)= U^{t,\zt}\cap U^{t,\eta}$.}

}
\smallskip

\noindent{Since $2\ent < d_{\OM}(K) \leq d(K^*)$, $\Psi_1(\tau)\in U^{1,x}, \ x=\zt,\eta$.
Furthermore, under the condition (\ref{2nd}), we see that 
$$
\tau \ \ \text{lies in the connectected component of $\Psi^{-1}_1(U^{1,x})$ containing $x$, 
$x=\zt,\eta$},
$$
since $\Psi_1$ is an imbedding. From Fact\,$2^\prime$, we deduce that 
$$
 \phi^{\zt}_f|_{U^{1,\zt}\cap U^{1,\tau}} \equiv \phi^{\tau}_f|_{U^{1,\zt}\cap U^{1,\tau}}, \qquad
 \phi^{\eta}_f|_{U^{1,\eta}\cap U^{1,\tau}} \equiv \phi^{\tau}_f|_{U^{1,\eta}\cap U^{1,\tau}},
$$
for all $f\in H(\OM)$. Therefore there is a neighbourhood $N$ of $\Psi_1(\tau)$ such that
\begin{equation}\label{rest}
\phi^{\zt}_{f}|_N \equiv \phi^{\eta}_{f}|_N.
\end{equation}
As $X:=D^{n}(\Psi_1(\zt),2\ent)\cap D^{n}(\Psi_1(\eta),2\ent)$ is 
connected, by (\ref{rest}) we have $\phi^{\zt}_{f}|_X\equiv
\phi^{\eta}_{f}|_X$. Therefore $\phi^{\zt}_{f}(z)=\phi^{\eta}_{f}(z)$.}
\smallskip

Note that there is no problem if $z\in D^{n}(\theta_1,\ent)\cap D^{n}(\theta_2,\ent)$, 
$\theta_1\neq \theta_2\in K\setminus \Psi_1(\overline{D}_1)$. Hence, we consider the case
when $z\in D^{n}(\Psi_1(\zt),\ent)\cap D^{n}(\theta,\ent)$, where $\zt\in\overline{D}_1$ and $\theta\in K$.
In this case, the first condition among those determining $\ent$ implies that
$\zt\in \overline{D}_1\setminus \omega(\delta)$. Thus, the argument for well-definedness is
{\em exactly} as in the first few lines of the last paragraph. This completes the proof of well-definedness.
\smallskip

By the construction of $G_f$ on $V$, we have, given that $z\in D^{n}(\theta,\ent)$ 
for some $\theta\in K$, $G_{f}(z)=f(z)$. Therefore $G_{f}|_{W}\equiv f|_{W}$. Finally, by the fact 
that holomorphicity is a local property, we conclude that $G_{f}\in H(V)$.
\end{proof}

We now turn to the proof of Theorem~\ref{T:thm2}. For this purpose, we need a result by 
Jupiter.

\begin{result}[Jupiter]\label{jupiter}
Let $\OM$ be a domain in $\mathbb{C}^{2}$, and let $(\widetilde{\OM},p,j)$ be 
its envelope of holomorphy. If $p$ is injective on $p^{-1}(\OM)$, then $p$ 
is injective on $\widetilde{\OM}$. In other words, if $\widetilde{\OM}$ is 
schlicht over $\OM$, then $\widetilde{\OM}$ is schlicht.
\end{result}

\begin{proof}[The proof of Theorem~\ref{T:thm2}]
By Result~\ref{jupiter}, $p:\widetilde{\OM}\rightarrow \CC$ is an injective map.
As $p$ is a local biholomorphism, $p(\widetilde{\OM})$ is an open connected set in $\CC$ and
$p^{-1}$ is a holomorphic map on it. Let us write 
$$
 V:=\bigcup\nolimits_{z\in \Psi_1(\overline{D}_1)}D^n(z,d_{\OM}(K))\cup \OM.
$$
We have $V\nsubseteq\OM$, since $\Psi_1(\overline{D}_1)\nsubseteq\OM$.
\smallskip

As $d_{\OM}(K)\leq d(K^*)$ and $\tilde{\Psi}_1(\overline{D}_1)\in 
(\widehat{K^*})_{\widetilde{\OM}}$, the Cartan--Thullen Theorem gives 
$V\subset p(\widetilde{\OM})$. Thus, if for any $f\in H(\OM)$, we define $G_{f}:V\rightarrow\cplx$ 
by
$$
G_f:=F_{f}\circ p^{-1},
$$
then $G_f\in H(V)$ and $G_{f}|_{\OM}\equiv f$.
\end{proof}

\begin{remark}
Note that we could have taken $V=p(\widetilde{\OM})$ in the above proof. But we want to 
highlight the fact that the Kontinuit{\"a}tssatz is {\em a means of deducing information 
about $p(\widetilde{\OM})$ when very little is known about the geometry of $\widetilde{\OM}$.} 
The spirit of Theorem~\ref{T:thm2} is that, while we know very little about the finer  
geometric properties of $\widetilde{\OM}$, this theorem shows what we can learn about 
$p(\widetilde{\OM})$ if we have a little extra information about $(\widetilde{\OM},p,j)$.
In applications of Theorem~\ref{T:thm2} and Theorem~\ref{T:MainThm}, it is understood that we 
have good information about the pair $(\OM,K)$. It is in keeping with this spirit that we 
chose $V$ as above.
\end{remark}  

\noindent{\textbf{Acknowledgements.} The author is grateful to Gautam Bharali and 
Kaushal Verma for their valuable comments and for many useful discussions during the course
of this paper.}

\end{document}